\newcommand{\R}{{\mathbb{R}}}
\def\div{ \hbox{\rm div}\,  }
\def\u{ \mathbf{u} }
\def\n{ \mathbf{n} }
\def\b{ \mathbf{B} }
\def\T{ \mathbb{T} }
\newcommand{\Z}{{\mathbb{Z}}}
\newcommand{\N}{{\mathbb{N}}}
\def\nn{\nonumber}
\newtheorem{theorem}{Theorem}[section]
\newtheorem{lemma}[equation]{Lemma}
\theoremstyle{remark}
\newtheorem{remark}[theorem]{Remark}
\numberwithin{equation}{section}
\newcommand{\norm}[2]{\left\lVert #1 \right\rVert_{#2}}
\begin{document}

\title[Global    solutions to  Hall-MHD system  ]{Global  small solutions to the inviscid Hall-MHD system}

\author[Xiaoping Zhai, Yongsheng Li and Yajuan Zhao]{ Xiaoping Zhai$^\dag$, Yongsheng Li$^\ddag$ and Yajuan Zhao$^\ddag$}

\address{$^{\dag}$ School  of Mathematics and Statistics, Shenzhen University,
 Shenzhen, 518060, China}
\email{zhaixp@szu.edu.cn}

\address{$^\ddag$ School of Mathematics,
South China University of Technology,
Guangzhou, 510640, China}
\email{yshli@scut.edu.cn}
\email{zhaoyj\_91@163.com}

\begin{abstract}
The local existence of smooth solutions to the inviscid Hall-MHD system has been obtained
since Chae, Degond and Liu [Ann. Inst. H. Poincar\'e Anal. Non Lin\'eaire,   {31} (2014), 555--565].
However, as we known, how to construct the global small solutions to the  inviscid Hall-MHD  system  is still an open problem. In the present paper, we  give a positive answer  in $\T^3$ when the initial magnetic field is close to a background magnetic field satisfying the Diophantine condition.
	\end{abstract}
\maketitle

\section{ Introduction and main result}
In this paper, we are concerned with the global well-posedness of the smooth solutions to
 the following inviscid Hall-MHD system:
\begin{eqnarray}\label{m1}
\left\{\begin{aligned}
&\partial_t \u+ \u\cdot\nabla \u+\nabla p=(\nabla\times\b)\times\b,\\
&\partial_t \b-\Delta \b-\nabla\times(\u\times\b)=-\nabla\times((\nabla\times\b)\times\b),\\
&\div \u =\div \b =0,\\
&(\u,\b)|_{t=0}=(\u_0,\b_0).
\end{aligned}\right.
\end{eqnarray}
Here, $x=(x_1,x_2,x_3)\in \T^3$ and $t\geq 0$ are the space and time variables, respectively. The unknown $\u$ is the velocity field, $\b$
is the magnetic field, $ {p}$ is the scalar pressure, respectively.

The application of Hall-MHD system is mainly from the understanding of magnetic reconnection phenomena \cite{Homann}, \cite{Lighthill}, where the topology structure of the magnetic field changes dramatically and the Hall effect must be included to get a correct description of this physical process.
Mathematical derivations of Hall-MHD system from either two-fluids or kinetic models can be found in  \cite{Acher}.

The study on the theory of well-posedness of solutions to the Hall-MHD system has grown enormously
in recent years.
For the viscous and resistive Hall-MHD system (with $-\Delta \u$), there have a  lot of excellent works, see instances \cite{Acher}, \cite{chae2014}, \cite{chaejde}, \cite{chaejde2}, \cite{chaejmfm},  \cite{weng}, \cite{danchin}, \cite{Dumas}, \cite{Homann}, \cite{liu}, \cite{Shalybkov}, \cite{wen1}, \cite{wen2}.
More precisely, Chae {\it et al.} \cite{chae2014} showed the global existence of Leray-Hopf weak solutions.
 Dumas {\it et al.} \cite{Dumas}
  have been further investigated  the weak solutions  both for the Maxwell-Landau-Lifshitz system and for the Hall-MHD system. The temporal decay estimates for weak solutions
to Hall-MHD system was established by Chae {\it et al.} \cite{chaejde2}. They also obtained algebraic decay rates for higher
order Sobolev norms of strong solutions with small initial data. It turned out that the Hall term does not affect the time asymptotic behavior, and the time decay rates behaved like those of the corresponding heat equation.
In addition, a blowup criterion and the global
existence of small classical solutions were also established in \cite{chae2014}. These results were later sharpened by
\cite{chaejde}.
In \cite{wen1}, \cite{wen2}, Weng studied the long-time
behaviour and obtained optimal space-time decay rates of strong solutions.

However, there has rather few results about the inviscid Hall-MHD system \eqref{m1}. Chae {\it et al.} \cite{chae2014} obtained the local existence of the smooth solutions in $\R^3$, later Chae {\it et al.} \cite{chaejmfm}
also derived the local  smooth solutions of \eqref{m1} in general dimension by considering the fractional magnetic diffusion.
To the author's knowledge,   it is still an open problem to construct the global  solutions  of the  inviscid Hall-MHD  system even for small initial data.
Inspired by \cite{zhangzhifei}, in the present paper, we  obtain the global small solutions  of the  inviscid Hall-MHD  system in $\T^3$    when  the initial magnetic field is close to a background magnetic field satisfying the Diophantine condition.
Compared with the usual inviscid incompressible MHD system, \eqref{m1} contains the extra term $\nabla\times(\nabla\times \b\times \b)$, which is the so called Hall term. The Hall term heightens the level of nonlinearity of the standard
MHD system from a second-order semilinear to a second-order quasilinear level,
significantly making its qualitative analysis more difficult.

Let ${\n}\in\R^3$ satisfy the so called Diophantine condition: for any $\mathbf{k}\in\Z^3\setminus \{0\},$
\begin{align}\label{diufantu}
|\n\cdot \mathbf{k}|\ge \frac{c}{|\mathbf{k}|^r}, \quad\hbox{for some $c>0$ and $r>2$.}
\end{align}

For the simplicity, we still use the notation $\b$ to denote the perturbation $\b -{\n}$. Hence, the perturbed equations can be rewritten  into
\begin{eqnarray}\label{m}
\left\{\begin{aligned}
&\partial_t \u+ \u\cdot\nabla \u+\nabla p={\n}\cdot\nabla \b+(\nabla\times\b)\times\b,\\
&\partial_t \b-\Delta\b-\nabla\times(\u\times\b)=\mathbf{n}\cdot\nabla \u-\mathbf{n}\cdot\nabla(\nabla\times\b)-\nabla\times((\nabla\times \b)\times\b),\\
&\div \u =\div \b =0,\\
&(\u,\b)|_{t=0}=(\u_0,\b_0).
\end{aligned}\right.
\end{eqnarray}

The   main result of the paper is stated as follows.
\begin{theorem}\label{dingli}
For any ${N}\ge 4r+7$ with $r>2$. Let $(\u_0,\b_0)\in H^{N}(\T^3)$ and
\begin{align}\label{pingjun}
\int_{\T^3}\u_0\,dx=\int_{\T^3}\b_0\,dx=0.
\end{align}
 If there exists a small constant $\varepsilon$ such that
\begin{align*}
\norm{\u_0}{H^{N}}+\norm{\b_0}{H^{N}}\le\varepsilon.
\end{align*}
Then the system \eqref{m} admits a  global  solution $( \u,\b)\in C([0,\infty );H^{N})$. Moreover, for any $t\ge 0$ and $r+4\le\beta <N $,
there holds
\begin{align*}
\norm{\u(t)}{H^{\beta}}+\norm{\b(t)}{H^{\beta}}\le C(1+t)^{-\frac{3({N}-\beta)}{2({N}-r-4)}}.
\end{align*}
\end{theorem}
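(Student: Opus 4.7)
The plan is a standard continuity (bootstrap) argument. Local existence in $H^N$ is furnished by Chae--Degond--Liu \cite{chae2014}, so it suffices to establish \emph{a priori} estimates that both prevent blow-up and produce the announced decay. I will combine three ingredients: (i) an $H^N$ energy estimate giving the $\b$-dissipation $\|\nabla\b\|_{H^N}^2$; (ii) a ``hidden'' dissipation for $\u$ extracted by inverting $\n\cdot\nabla$ through the Diophantine condition \eqref{diufantu}; and (iii) a Schonbek-type Fourier splitting (or equivalently a weighted-in-time energy) that converts the time-integrable dissipation into polynomial decay of lower norms, which then interpolates against the uniform top-norm bound.

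\textbf{$H^N$ energy estimate.}
Apply $\partial^\alpha$ with $|\alpha|\le N$ to \eqref{m} and pair with $(\partial^\alpha\u,\partial^\alpha\b)$ in $L^2$. The pressure disappears since $\div\u=0$. Integration by parts on $\T^3$ cancels the linear cross terms,
\begin{align*}
\int_{\T^3}\partial^\alpha(\n\cdot\nabla\b)\cdot\partial^\alpha\u\,dx+\int_{\T^3}\partial^\alpha(\n\cdot\nabla\u)\cdot\partial^\alpha\b\,dx=0,
\end{align*}
and, combining antisymmetry of $\n\cdot\nabla$ with curl-duality, also the Hall-linear piece $\int\partial^\alpha(\n\cdot\nabla(\nabla\times\b))\cdot\partial^\alpha\b\,dx=0$. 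Kato--Ponce/Moser commutator estimates handle the quadratic terms $\u\cdot\nabla\u$ and $\nabla\times(\u\times\b)$, while the quasilinear Hall term $\nabla\times((\nabla\times\b)\times\b)$ is integrated once against $\nabla\times\partial^\alpha\b$ to expose only one top-order derivative per factor. This yields
\begin{align*}
\tfrac12\tfrac{d}{dt}\|(\u,\b)\|_{H^N}^2+\|\nabla\b\|_{H^N}^2\lesssim \|(\nabla\u,\nabla\b)\|_{L^\infty}\|(\u,\b)\|_{H^N}^2+\|\b\|_{H^N}\|\nabla\b\|_{H^N}^2,
\end{align*}
and the last term is absorbed by the dissipation once $\|\b\|_{H^N}\ll 1$.

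\textbf{Hidden dissipation for $\u$ via the Diophantine condition.}
The mean-zero condition \eqref{pingjun} propagates in time because every nonlinear term in \eqref{m} is a divergence or a curl. On mean-zero fields, \eqref{diufantu} yields the Fourier-side inversion $\|f\|_{H^s}\lesssim \|\n\cdot\nabla f\|_{H^{s+r}}$. Solving \eqref{m}$_2$ for $\n\cdot\nabla\u$,
\begin{align*}
\n\cdot\nabla\u=\partial_t\b-\Delta\b+\n\cdot\nabla(\nabla\times\b)-\nabla\times(\u\times\b)+\nabla\times((\nabla\times\b)\times\b),
\end{align*}
and re-expressing $\partial_t\b$ from the same equation, I obtain
\begin{align*}
\|\u\|_{H^\beta}\lesssim \|\b\|_{H^{\beta+r+2}}+(\text{quadratic remainders controlled by smallness}).
\end{align*}
This is the crucial ``hidden'' dissipation: at the cost of $r+2$ derivatives, $\u$ is slaved to the norm of $\b$ at which the heat dissipation acts. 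The threshold $N\ge 4r+7$ originates from requiring every Diophantine-induced derivative loss (entering both the energy and the decay estimates, and interacting with the nonlinearities) to sit strictly below $N$.

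\textbf{Decay and the main obstacle.}
With Steps 2--3 in hand, I apply a Schonbek-type Fourier splitting (or equivalently the weighted-in-time energy $(1+t)^\gamma\|(\u,\b)\|_{H^{r+4}}^2$) in which the $\u$-part of the dissipation is supplied by the Diophantine bound. This yields the base rate $(1+t)^{-3/2}$ at the scale $H^{r+4}$. Gagliardo--Nirenberg interpolation,
\begin{align*}
\|(\u,\b)\|_{H^\beta}\le \|(\u,\b)\|_{H^{r+4}}^{(N-\beta)/(N-r-4)}\,\|(\u,\b)\|_{H^N}^{(\beta-r-4)/(N-r-4)},
\end{align*}
combined with the uniform $H^N$ bound, then gives the announced rate $(1+t)^{-3(N-\beta)/[2(N-r-4)]}$. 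The main obstacle throughout is the quasilinear Hall term $\nabla\times((\nabla\times\b)\times\b)$: it is second-order and therefore competes directly with the $\b$-dissipation, and every commutator estimate must be done sharply (via Bony's paraproduct decomposition) because the Diophantine bookkeeping has already consumed $r+2$ derivatives. Any waste there would either inflate the regularity threshold beyond $4r+7$ or degrade the decay exponent.
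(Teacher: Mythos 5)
Your overall strategy (local existence from \cite{chae2014}, an $H^N$ energy inequality with dissipation only in $\b$, a hidden dissipation for $\u$ extracted from the magnetic equation via Lemma-type inversion of $\n\cdot\nabla$, then decay of a lower norm plus interpolation) matches the paper. But the two steps where the real work happens are not carried out correctly. First, your extraction of the hidden dissipation is circular as written: solving the second equation of \eqref{m} for $\n\cdot\nabla\u$ leaves the term $\partial_t\b$ on the right-hand side, and ``re-expressing $\partial_t\b$ from the same equation'' just reproduces $\n\cdot\nabla\u$. A pointwise-in-time bound of the form $\norm{\u}{H^\beta}\lesssim\norm{\b}{H^{\beta+r+2}}+\cdots$ is not available. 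The paper's device (Lemma \ref{ping13}) is to test $D^s$ of the magnetic equation against $D^s(\n\cdot\nabla\u)$ and to write
\begin{align*}
\big\langle D^s\partial_t\b,D^s(\n\cdot\nabla\u)\big\rangle=\frac{d}{dt}\big\langle D^s\b,D^s(\n\cdot\nabla\u)\big\rangle+\big\langle D^s(\n\cdot\nabla\b),D^s\partial_t\u\big\rangle,
\end{align*}
substituting $\partial_t\u$ from the \emph{velocity} equation in the last term; the total time derivative is then absorbed by augmenting the energy to $\mathcal{E}(t)=A(\norm{\u}{H^{r+4}}^2+\norm{\b}{H^{r+4}}^2)-\sum_{0\le s\le r+3}\langle D^s\b,D^s(\n\cdot\nabla\u)\rangle$. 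This cross-term/modified-energy construction is the heart of the proof and is absent from your argument.

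Second, the decay mechanism is not Fourier splitting. On $\T^3$ with mean-zero data there is a spectral gap, so $\norm{\b}{H^{r+5}}\le C\norm{\nabla\b}{H^{r+4}}$ outright and Schonbek's method (designed for the low-frequency degeneracy on $\R^3$) has no role. The rate comes instead from a Lyapunov inequality: since $\mathcal{D}(t)=A\norm{\nabla\b}{H^{r+4}}^2+\norm{\n\cdot\nabla\u}{H^{r+3}}^2$ controls $\norm{\u}{H^3}$ via Lemma \ref{diu}, interpolating $\norm{\u}{H^{r+4}}^2\le\norm{\u}{H^3}^{3/2}\norm{\u}{H^N}^{1/2}$ gives $\mathcal{E}(t)\le C(\mathcal{D}(t))^{3/4}$ and hence $\frac{d}{dt}\mathcal{E}+c\,\mathcal{E}^{4/3}\le0$, i.e.\ $\mathcal{E}(t)\le C(1+t)^{-3}$. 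This is also the true source of the threshold $N\ge 4r+7$: the identity $r+4=\frac34\cdot3+\frac14 N$ forces $N=4r+7$, not a vague bookkeeping of ``Diophantine-induced derivative losses.'' The final step --- integrating $\norm{\u}{H^3}+\norm{\b}{H^3}$ in time from the $(1+t)^{-3/2}$ decay to close the Gronwall/bootstrap for the $H^N$ norm --- you would still need to spell out, since the $H^N$ inequality has no useful dissipation for $\u$ and the smallness must be recovered from time-integrability of the lower norms.
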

\begin{remark}
For any $t\ge0$, the solutions of \eqref{m} will preserve this property if there holds \eqref{pingjun}.
\end{remark}
\begin{remark}
The method used here is still valid for the  fractional magnetic diffusion $(-\Delta)^{\gamma}$ with $\gamma>\frac12$. That is to say, we can also extend the local solutions obtained in \cite{chaejmfm} to be global  under the same assumptions of Theorem \ref{dingli}.
\end{remark}
\begin{remark}
Whether can we remove the Diophantine condition \eqref{diufantu} is a  challenged open problem. This is left in the future work.
\end{remark}
\section{The proof of the theorem}
The  proof of the Theorem \ref{dingli} relies heavily on the following lemma whose proof is standard by the Plancheral formula.
\begin{lemma}\label{diu}
If ${\n}\in\R^3$ satisfies the Diophantine condition \eqref{diufantu}, then it holds that for any $s\in R,$
\begin{equation}\label{mor}
\|f\|_{H^{s}}\le C\|{\n}\cdot\nabla f\|_{H^{s+r}}
\end{equation}
if $ \nabla f\in H^{s+r}(\T^3)$ satisfies $\int_{\T^3}f\,dx=0.$
\end{lemma}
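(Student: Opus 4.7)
The plan is to expand $f$ in a Fourier series on $\mathbb{T}^3$ and translate the inequality into a pointwise comparison of Fourier multipliers, where the Diophantine condition will directly furnish the required lower bound.

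First I would write
\begin{equation*}
f(x)=\sum_{\mathbf{k}\in\Z^3}\widehat{f}(\mathbf{k})e^{i\mathbf{k}\cdot x},
\end{equation*}
and note that the zero-mean assumption $\int_{\T^3}f\,dx=0$ is exactly the statement $\widehat{f}(0)=0$. A direct computation gives
\begin{equation*}
\widehat{\mathbf{n}\cdot\nabla f}(\mathbf{k})=i(\mathbf{n}\cdot\mathbf{k})\widehat{f}(\mathbf{k}),
\end{equation*}
so by Plancherel,
\begin{equation*}
\|\mathbf{n}\cdot\nabla f\|_{H^{s+r}}^2=\sum_{\mathbf{k}\neq 0}(1+|\mathbf{k}|^2)^{s+r}|\mathbf{n}\cdot\mathbf{k}|^2|\widehat{f}(\mathbf{k})|^2.
\end{equation*}

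Next I would apply the Diophantine bound $|\mathbf{n}\cdot\mathbf{k}|\ge c|\mathbf{k}|^{-r}$ for $\mathbf{k}\in\Z^3\setminus\{0\}$, yielding
\begin{equation*}
(1+|\mathbf{k}|^2)^{s+r}|\mathbf{n}\cdot\mathbf{k}|^2\ \ge\ c^2\,(1+|\mathbf{k}|^2)^{s+r}|\mathbf{k}|^{-2r}\ \ge\ c^2\,(1+|\mathbf{k}|^2)^{s},
\end{equation*}
where in the last step I use the trivial inequality $(1+|\mathbf{k}|^2)^{r}\ge|\mathbf{k}|^{2r}$ valid for every $\mathbf{k}$. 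Summing against $|\widehat{f}(\mathbf{k})|^2$ and using again that $\widehat{f}(0)=0$, I get
\begin{equation*}
\|\mathbf{n}\cdot\nabla f\|_{H^{s+r}}^2\ \ge\ c^2\sum_{\mathbf{k}\neq 0}(1+|\mathbf{k}|^2)^{s}|\widehat{f}(\mathbf{k})|^2\ =\ c^2\|f\|_{H^{s}}^2,
\end{equation*}
which is precisely \eqref{mor}.

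There is no genuine obstacle here: the proof is purely algebraic once one sets up Fourier series. The only delicate point worth flagging is that the mean-zero hypothesis is essential, since otherwise the $\mathbf{k}=0$ mode of $f$ is annihilated by $\mathbf{n}\cdot\nabla$ and no such inequality can hold; this is why the zero-mean condition \eqref{pingjun} on $(\u_0,\b_0)$ is imposed in Theorem \ref{dingli} and propagated in time via the Remark following it.
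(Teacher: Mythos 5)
Your proof is correct and follows exactly the route the paper intends: the authors state that the lemma is ``standard by the Plancherel formula'' and omit the details, and your Fourier-series computation (symbol $i\mathbf{n}\cdot\mathbf{k}$, the Diophantine lower bound, and $(1+|\mathbf{k}|^2)^{r}\ge|\mathbf{k}|^{2r}$) is precisely that standard argument. Your closing remark about the necessity of the mean-zero hypothesis is also accurate and consistent with how the lemma is used in the paper.
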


Now, we begin to prove the main theorem.
\subsection{$L^2$ energy estimate}
Firstly, denote $\langle a,b\rangle$ the $L^2(\T^3)$ inner product of $a$ and $b$.
A standard energy estimate gives
\begin{align}\label{ping1}
&\frac12\frac{d}{dt}(\norm{\u}{L^2}^2+\norm{\b}{L^2}^2)+\norm{\nabla\b}{L^2}^2=0
\end{align}
where  we have used the   cancellations
\begin{align*}
&\big\langle \u\cdot\nabla \u,\u\big\rangle
   =\big\langle \u\cdot\nabla \b,\b\big\rangle=0,
\quad\big\langle \b\cdot\nabla \b,\u\big\rangle
       +\big\langle \b\cdot\nabla \u,\b\big\rangle=0,\nn\\
&\big\langle{\n}\cdot\nabla \b, \u\big\rangle
   +\big\langle{\n}\cdot\nabla \u, \b \big\rangle=0,\quad
   \big\langle\nabla\times((\nabla\times \b)\times\b),\b\big\rangle=0.
\end{align*}
The above cancellation is crucially important for
 the existence of global smooth solution for small data.
\subsection{High order energy estimate}
  Let $\alpha=(\alpha_{1}, \alpha_{2}, \alpha_{3})\in \N^3$
 be a multi-index.
  We operate
$D^\alpha=\partial^{|\alpha|}/\partial x_1^{\alpha_{1}}\cdots\partial x_3^{\alpha_{3}}$
(where $|\alpha| = \alpha_{1}+\cdots+\alpha_{3}$)
on the first two equations respectively
and take the scalar product of them
with $D^\alpha \u$ and $D^\alpha \b$ respectively,
 add them together and then sum the result over $|\alpha|\leq m$.
  We obtain
\begin{align}\label{3.8}
&\frac{1}{2}\frac{d}{dt}\left(\left\|\u\right\|_{H^m}^2+\left\|\b\right\|_{H^m}^2\right)
        +\left\|\nabla\b\right\|_{H^m}^2\nn\\
&\quad=-\sum_{0<|\alpha|\leq m}\left\langle D^{\alpha}\big((\nabla\times\b)\times\b\big), D^{\alpha}(\nabla\times\b)\right\rangle\nn\\
   &\qquad+\sum_{0<|\alpha|\leq m}\left\langle D^{\alpha}(\u\times\b)
     , D^{\alpha}(\nabla\times\b)\right\rangle\nn\\
&\qquad-\sum_{0<|\alpha|\leq m}\left\langle D^{\alpha}(\u\cdot\nabla \u), D^{\alpha}\u \right\rangle\nn\\
&\qquad+\sum_{0<|\alpha|\leq m}\left\langle D^{\alpha}\left((\nabla\times\b)\times\b\right)
     , D^{\alpha}\u \right\rangle\nn\\
&\quad \stackrel{\mathrm{def}}{=}I_1+I_2+I_3+I_4.
\end{align}
In the following, we estimate successively each of the $I_1$-$I_4$ terms.

For $I_1$, in view of the cancellation
$$\left\langle\big(D^{\alpha}(\nabla\times\b)\big)\times\b,
    D^{\alpha}(\nabla\times\b)\right\rangle=0,$$
 we can get
\begin{align*}
I_1=-\sum_{0<|\alpha|\leq m}\left\langle
    \left[D^{\alpha}\big((\nabla\times\b)\times\b\big)
    -\big(D^{\alpha}(\nabla\times\b)\big)\times\b\right],
    D^{\alpha}(\nabla\times\b)\right\rangle.
\end{align*}
 Using the well-known calculus inequality,
\begin{align}\label{3.9}
 \sum_{|\alpha|\leq m}\left\|D^{\alpha}(fg)-(D^{\alpha}f)g\right\|_{L^2}
 \leq C\big(\|f\|_{H^{m-1}}\|\nabla g\|_{L^{\infty}}
        +\|f\|_{L^{\infty}}\|g\|_{H^{m}}\big),
\end{align}
we have
\begin{align}\label{3.10}
I_1\leq& C\big(\|\b\|_{H^{m}}\|\nabla \b\|_{L^{\infty}}
        +\|\nabla\b\|_{L^{\infty}}\|\b\|_{H^{m}}\big)\left\|\nabla\b\right\|_{H^{m}}\nn\\
  \leq&\frac{1}{4}\left\|\nabla\b\right\|_{H^{m}}^2
         +C \|\b\|^2_{H^{m}}\|\nabla \b\|^2_{L^{\infty}}.
\end{align}
On the other hand, using Leibnitz formula and the Sobolev inequality, we obtain
\begin{align}\label{3.12}
I_2\leq&\sum_{0<|\alpha|\leq m} \left\|D^{\alpha}(\u\times \b)\right\|_{L^2}
          \|\nabla \b\|_{H^{m}}\nn\\
    \leq&C\big(\|\u\|_{L^{\infty}}\|\b\|_{H^{m}}
        +\|\u\|_{H^{m}}\|\b\|_{L^{\infty}}\big)\left\|\nabla\b\right\|_{H^{m}}\nn\\
   \leq&\frac{1}{4}\left\|\nabla\b\right\|_{H^{m}}^2
         +C\|\u\|^2_{L^{\infty}} \|\b\|^2_{H^{m}}
         +C\|\u\|^2_{H^{m}}\|\b\|^2_{L^{\infty}}.
\end{align}
Then, we remark that
\begin{align*}
I_3=-\sum_{0<|\alpha|\leq m}\left\langle
        \big[D^{\alpha}(\u\cdot\nabla \u)-\u\cdot\nabla D^{\alpha}\u
        \big], D^{\alpha}\u \right\rangle.
\end{align*}
Indeed, the second term is zero by the fact that $\u$ is divergence free.
Then, similarly to the above calculation,
using the calculus inequality \eqref{3.9}, we obtain
\begin{align}\label{3.13}
I_3\leq&\sum_{0<|\alpha|\leq m}
        \left\|D^{\alpha}(\u\cdot\nabla \u)-\u\cdot\nabla D^{\alpha}\u\right\|_{L^2}
         \| \u\|_{H^{m}}\nn\\
\leq& C\|\nabla \u\|_{L^\infty}\|\u\|_{H^{m}}^2.
\end{align}
From \eqref{3.8} we get
\begin{align*}
I_4\leq&\sum_{0<|\alpha|\leq m}
        \left\|(\nabla\times\b)\times\b\right\|_{H^{m}}
         \| \u\|_{H^{m}}.
\end{align*}
 Using Leibnitz formula, we derive
\begin{align}\label{3.14}
I_4\leq&C\big(\|\nabla\b\|_{L^{\infty}}\|\b\|_{H^{m}}
        +\|\nabla\b\|_{H^{m}}\|\b\|_{L^{\infty}}\big)\left\| \u\right\|_{H^{m}}\nonumber\\
\leq&C\|\nabla\b\|_{L^{\infty}}\|\b\|_{H^{m}}\left\| \u\right\|_{H^{m}}
         +\frac{1}{2}\|\nabla\b\|_{H^{m}}^2
         +C\|\b\|_{L^{\infty}}^2\left\|\u\right\|_{H^{m}}^2\nonumber\\
         \leq&\frac{1}{2}\|\nabla\b\|_{H^{m}}^2+C(\|\nabla\b\|_{L^{\infty}}+\|\b\|_{L^{\infty}}^2)
         (\left\|\u\right\|_{H^{m}}^2+\left\|\b\right\|_{H^{m}}^2).
\end{align}
From estimates \eqref{3.10}, \eqref{3.12}, \eqref{3.13} and \eqref{3.14}, we obtain
\begin{align}\label{3.16}
\frac{d}{dt}&\big(\|\u\|_{H^{m}}^2+\|\b\|_{H^{m}}^2\big)
 +\|\nabla\b\|_{H^{m}}^2
       \nonumber\\
 \leq&C\big(\|\nabla\u\|_{L^{\infty}}+\|\nabla\b\|_{L^{\infty}}+\|\nabla\b\|_{L^{\infty}}^2+\|\u\|_{L^{\infty}}^2+\|\b\|_{L^{\infty}}^2\big)
  \big(\|\u\|_{H^{m}}^2+\|\b\|_{H^{m}}^2\big).
\end{align}

\subsection{A key Lemma}
The following lemma which relies heavily on the structural characteristics of the system \eqref{m} is crucial to get the time decay for the velocity field.
\begin{lemma}\label{ping13}
Assume that
\begin{align}\label{ping14}
\sup_{t\in[0,T]}(\norm{\u}{H^{N}}+\norm{\b}{H^{N}})\le \delta,
\end{align}
for some $0<\delta<1.$ Then there holds that
\begin{align}\label{ping15}
&\norm{{\n}\cdot\nabla \u}{H^{r+3}}^2-\sum_{0\le s\le r+3}\frac{d}{dt}\big\langle{D^s}\b,{D^s}({\n}\cdot\nabla \u)\big\rangle\nn\\
&\quad\le C\norm{ \b}{H^{r+5}}^2+C\delta^2\norm{\u}{H^3}^2.
\end{align}

\end{lemma}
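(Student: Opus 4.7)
The plan is to test the magnetic field equation against $D^s(\n\cdot\nabla\u)$ and sum over $0\le s\le r+3$. Reading off the second equation of \eqref{m} in the form
\[
\n\cdot\nabla\u = \partial_t\b-\Delta\b+\n\cdot\nabla(\nabla\times\b)-\nabla\times(\u\times\b)+\nabla\times((\nabla\times\b)\times\b),
\]
I apply $D^s$ to both sides and take the $L^2$ inner product with $D^s(\n\cdot\nabla\u)$. The left-hand side produces $\|\n\cdot\nabla\u\|_{H^{r+3}}^2$, and the $\partial_t\b$ piece splits as
\[
\sum_{s\le r+3}\langle D^s\partial_t\b,D^s(\n\cdot\nabla\u)\rangle=\sum_{s\le r+3}\frac{d}{dt}\langle D^s\b,D^s(\n\cdot\nabla\u)\rangle-\sum_{s\le r+3}\langle D^s\b,D^s(\n\cdot\nabla\partial_t\u)\rangle,
\]
so that moving the first sum to the left produces exactly the quantity on the LHS of \eqref{ping15}.

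Next, I substitute $\partial_t\u=-\u\cdot\nabla\u-\nabla p+\n\cdot\nabla\b+(\nabla\times\b)\times\b$ from the momentum equation into the last sum. The pressure drops thanks to $\div D^s\b=0$. The linear piece $(\n\cdot\nabla)^2\b$ yields, after two integrations by parts in the constant direction $\n$, the good quantity $+\|\n\cdot\nabla\b\|_{H^{r+3}}^2$, which is controlled by $C\|\b\|_{H^{r+5}}^2$. The two nonlinear pieces $\n\cdot\nabla(\u\cdot\nabla\u)$ and $\n\cdot\nabla((\nabla\times\b)\times\b)$ are handled by first integrating by parts once in $\n\cdot\nabla$ to move a derivative onto $\b$, then using the tame product estimate $\|fg\|_{H^{r+3}}\le C\|f\|_{L^\infty}\|g\|_{H^{r+3}}+C\|f\|_{H^{r+3}}\|g\|_{L^\infty}$ and the 3D Sobolev embeddings $H^2\hookrightarrow L^\infty$ and $H^3\hookrightarrow W^{1,\infty}$. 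These embeddings extract a low-norm factor $\|\u\|_{H^3}$ (resp.\ $\|\b\|_{H^3}$) while the remaining high-order factor is absorbed by $\delta$ via \eqref{ping14}. Young's inequality then bounds these contributions by $C\|\b\|_{H^{r+5}}^2+C\delta^2\|\u\|_{H^3}^2$.

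The four remaining terms, which come from pairing $-\Delta\b,\ \n\cdot\nabla(\nabla\times\b),\ -\nabla\times(\u\times\b)$ and $\nabla\times((\nabla\times\b)\times\b)$ directly against $D^s(\n\cdot\nabla\u)$, are treated in the same spirit. Cauchy--Schwarz on the two linear-in-$\b$ terms gives contributions $\le\varepsilon\|\n\cdot\nabla\u\|_{H^{r+3}}^2+C_\varepsilon\|\b\|_{H^{r+5}}^2$; the two nonlinear ones pick up an extra factor of $\delta$ from \eqref{ping14}, yielding $\le C\delta\|\n\cdot\nabla\u\|_{H^{r+3}}(\|\u\|_{H^3}+\|\b\|_{H^3})+C\|\b\|_{H^{r+5}}^2$. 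Choosing $\varepsilon$ and $\delta$ small, every $\|\n\cdot\nabla\u\|_{H^{r+3}}^2$ on the right is absorbed into the left and \eqref{ping15} follows. The main obstacle is the bookkeeping needed so that whenever $\u$ appears at the top order $r+4$ in a nonlinear term, it is paired with a small factor coming from \eqref{ping14}, leaving only $\|\u\|_{H^3}$ in the remainder; this is exactly what $\|\u\|_{L^\infty}\le C\|\u\|_{H^2}$ and $\|\nabla\u\|_{L^\infty}\le C\|\u\|_{H^3}$ are tailored for, and it is precisely the $H^3$ norm that Lemma \ref{diu} (at $s=3$) will later convert into $\|\n\cdot\nabla\u\|_{H^{r+3}}$ when this lemma feeds the decay argument.
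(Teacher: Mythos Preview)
Your proposal is correct and follows essentially the same route as the paper: test the $\b$-equation against $D^s(\n\cdot\nabla\u)$, split off the time derivative, substitute the $\u$-equation for $\partial_t\u$, drop the pressure via $\div\b=0$, and estimate the linear pieces by Cauchy--Schwarz/Young and the nonlinear ones by the tame product rule together with $H^2\hookrightarrow L^\infty$, $H^3\hookrightarrow W^{1,\infty}$. The only cosmetic slip is the phrase ``choosing $\varepsilon$ and $\delta$ small'' at the end: the lemma holds for every $0<\delta<1$ with $C$ independent of $\delta$, and only the Young parameter $\varepsilon$ needs to be taken small (the paper uses $\varepsilon=\tfrac{1}{16}$) to absorb the $\|\n\cdot\nabla\u\|_{H^{r+3}}^2$ contributions.
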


\begin{proof}
Applying ${D^s} (0\le s\le r+3) $ to the second equation of \eqref{m}, and multiplying it by ${D^s}({\n}\cdot\nabla \u)$
then integrating over $\T^3$, we obtain
\begin{align}\label{ping16}
\norm{{D^s}({\n}\cdot\nabla \u)}{L^2}^2
=&{\big\langle{D^s}\partial_t \b,{D^s}({\n}\cdot\nabla \u)\big\rangle}-{\big\langle{D^s} \Delta\b,{D^s}({\n}\cdot\nabla \u)\big\rangle}\nn\\
&+{\big\langle{D^s} (\u\cdot\nabla \b),{D^s}({\n}\cdot\nabla \u)\big\rangle}\nn\\
&-{\big\langle{D^s}(\b\cdot\nabla \u),{D^s}({\n}\cdot\nabla \u)\big\rangle}\nn\\
&+{\big\langle{D^s} (\mathbf{n}\cdot\nabla(\nabla\times\b)),{D^s}({\n}\cdot\nabla \u)\big\rangle}\nn\\
&+{\big\langle{D^s} (\nabla\times((\nabla\times \b)\times\b)),{D^s}({\n}\cdot\nabla \u)\big\rangle}\nn\\
 \stackrel{\mathrm{def}}{=}&I_5+I_6+I_7+I_8+I_9+I_{10}.
\end{align}
Thanks to the H\"older inequality, Young's inequality, and the embedding relation,
 we have
\begin{align}\label{ping16+1}
I_6\le& C\norm{{D^s}\Delta\b}{L^2}\norm{{D^s}({\n}\cdot\nabla \u)}{L^2}\nn\\
\le&\frac{1}{16}\norm{{D^s}({\n}\cdot\nabla \u)}{L^2}^2+C\norm{\Delta \b}{H^s}^2\nn\\
\le&\frac{1}{16}\norm{{D^s}({\n}\cdot\nabla \u)}{L^2}^2+C\norm{\b}{H^{s+2}}^2\nn\\
\le&\frac{1}{16}\norm{{D^s}({\n}\cdot\nabla \u)}{L^2}^2+C\norm{\b}{H^{r+5}}^2.
\end{align}
Similarly,  using Leibnitz formula and the Sobolev inequality, we obtain
\begin{align}\label{ping17}
I_7\le& C\norm{{D^s}(\u\cdot\nabla \b)}{L^2}\norm{{D^s}({\n}\cdot\nabla \u)}{L^2}\nn\\
\le& C(\norm{\u}{L^\infty}\norm{\nabla \b}{H^s}+\norm{\nabla \b}{L^\infty}\norm{ \u}{H^s} )\norm{{D^s}({\n}\cdot\nabla \u)}{L^2}\nn\\
\le&\frac{1}{16}\norm{{D^s}({\n}\cdot\nabla \u)}{L^2}^2+C(\norm{\u}{H^2}^2\norm{\nabla \b}{H^s}^2+\norm{\nabla \b}{H^2}^2\norm{ \u}{H^s}^2 )\nn\\
\le&\frac{1}{16}\norm{{D^s}({\n}\cdot\nabla \u)}{L^2}^2+C\norm{\u}{H^N}^2\norm{ \b}{H^{s+1}}^2\nn\\
\le&\frac{1}{16}\norm{{D^s}({\n}\cdot\nabla \u)}{L^2}^2+C\delta^2\norm{ \b}{H^{r+4}}^2,
\end{align}
and
\begin{align}\label{ping19}
I_8\le& C\norm{{D^s}(\b\cdot\nabla \u)}{L^2}\norm{{D^s}({\n}\cdot\nabla \u)}{L^2}\nn\\
\le& C(\norm{\b}{L^\infty}\norm{\nabla \u}{H^s}+\norm{\nabla \u}{L^\infty}\norm{ \b}{H^s} )\norm{{D^s}({\n}\cdot\nabla \u)}{L^2}\nn\\
\le& C(\norm{\b}{H^2}\norm{ \u}{H^{s+1}}+\norm{\nabla \u}{H^2}\norm{ \b}{H^s} )\norm{{D^s}({\n}\cdot\nabla \u)}{L^2}\nn\\
\le&\frac{1}{16}\norm{{D^s}({\n}\cdot\nabla \u)}{L^2}^2+C\norm{\u}{H^N}^2(\norm{ \b}{H^{s}}^2+\norm{ \b}{H^{2}}^2)\nn\\
\le&\frac{1}{16}\norm{{D^s}({\n}\cdot\nabla \u)}{L^2}^2+C\delta^2\norm{ \b}{H^{r+3}}^2.
\end{align}
The estimate $I_9$ is similar to $I_6$,
\begin{align}\label{ping18}
I_9\le& C\norm{{D^s}\nabla^2\b}{L^2}\norm{{D^s}({\n}\cdot\nabla \u)}{L^2}\nn\\
\le&\frac{1}{16}\norm{{D^s}({\n}\cdot\nabla \u)}{L^2}^2+C\norm{\nabla^2 \b}{H^s}^2\nn\\
\le&\frac{1}{16}\norm{{D^s}({\n}\cdot\nabla \u)}{L^2}^2+C\norm{\b}{H^{s+2}}^2\nn\\
\le&\frac{1}{16}\norm{{D^s}({\n}\cdot\nabla \u)}{L^2}^2+C\norm{\b}{H^{r+5}}^2.
\end{align}
Moreover,
for any $0\le s\le r+3$ and ${N}\ge r+4$, there holds
\begin{align}\label{ping20}
I_{10}\le&\big\langle{D^s} (\b\cdot\nabla(\nabla\times\b)-(\nabla\times\b)\cdot\nabla\b),{D^s}({\n}\cdot\nabla \u)\big\rangle\nn\\
\le& C(\norm{\b}{L^\infty}\norm{\nabla^2 \b}{H^s}+\norm{\nabla^2 \b}{L^\infty}\norm{ \b}{H^s} +\norm{\nabla \b}{L^\infty}\norm{ \nabla\b}{H^s} )\norm{{D^s}({\n}\cdot\nabla \u)}{L^2}\nn\\
\le&\frac{1}{16}\norm{{D^s}({\n}\cdot\nabla \u)}{L^2}^2+C\norm{ \b}{H^{s+2}}^2\norm{\b}{H^N}^2\nn\\
\le&\frac{1}{16}\norm{{D^s}({\n}\cdot\nabla \u)}{L^2}^2+C\delta^2\norm{\b}{H^{r+5}}^2.
\end{align}
Finally, we have to bound the first term on the
right hand side of \eqref{ping16}. In fact, exploiting the first equation in \eqref{m}, we can rewrite this term into
\begin{align}\label{ping21}
\big\langle{D^s}\partial_t \b,{D^s}({\n}\cdot\nabla \u)\big\rangle
=&\frac{d}{dt}\big\langle{D^s}\b,{D^s}({\n}\cdot\nabla \u)\big\rangle-\big\langle{D^s}\b,{D^s}({\n}\cdot\nabla \partial_t\u)\big\rangle\nn\\
=&\frac{d}{dt}\big\langle{D^s}\b,{D^s}({\n}\cdot\nabla \u)\big\rangle+\big\langle{D^s}({\n}\cdot\nabla\b),{D^s} \partial_t\u\big\rangle\nn\\
=&\frac{d}{dt}\big\langle{D^s}\b,{D^s}({\n}\cdot\nabla \u)\big\rangle+\big\langle{D^s}({\n}\cdot\nabla \b),{D^s}(\b\cdot\nabla \b)\big\rangle\\
&+\big\langle{D^s}({\n}\cdot\nabla \b),{D^s}({\n}\cdot\nabla \b)\big\rangle-\big\langle{D^s}({\n}\cdot\nabla \b),{D^s}(\u\cdot\nabla \u)\big\rangle\nn
\end{align}
where we have used the following cancellation
\begin{align*}
\big\langle{D^s}({\n}\cdot\nabla\b),{D^s} \nabla p\big\rangle=0.
\end{align*}
The second term on the right hand side of \eqref{ping21} can be bounded as
\begin{align}\label{pingping2}
\big\langle{D^s}({\n}\cdot\nabla \b),{D^s}(\b\cdot\nabla \b)\big\rangle
\le& C\norm{{D^s}({\n}\cdot\nabla \b)}{L^2}(\norm{\b}{L^\infty}\norm{\nabla \b}{H^s}+\norm{\nabla \b}{L^\infty}\norm{ \b}{H^s} )\nn\\
\le& C\norm{\b}{H^N}\norm{\nabla \b}{H^s}^2\nn\\
\le& C\delta\norm{\nabla \b}{H^s}^2.
\end{align}
In the same manner, we can deal with the last term  on the right hand side of \eqref{ping21}
\begin{align}\label{pingping3}
\big\langle{D^s}({\n}\cdot\nabla \b),{D^s}(\u\cdot\nabla \u)\big\rangle
\le& C\norm{{D^s}({\n}\cdot\nabla \b)}{L^2}(\norm{\u}{L^\infty}\norm{\nabla \u}{H^s}+\norm{\nabla \u}{L^\infty}\norm{ \u}{H^s} )\nn\\
\le& C\norm{{D^s}({\n}\cdot\nabla \b)}{L^2}(\norm{\u}{H^2}\norm{\nabla \u}{H^s}+\norm{\nabla \u}{H^2}\norm{ \u}{H^s} )\nn\\
\le& C\norm{\nabla \b}{H^s}\norm{\u}{H^N}\norm{\u}{H^3}\nn\\
\le& C\norm{\nabla \b}{H^s}^2+C\delta^2\norm{\u}{H^3}^2.
\end{align}
Inserting \eqref{pingping2} and \eqref{pingping3} into \eqref{ping21} gives
\begin{align}\label{ping22}
\big\langle{D^s}\partial_t \b,{D^s}({\n}\cdot\nabla \u)\big\rangle
\le&\frac{d}{dt}\big\langle{D^s}\b,{D^s}({\n}\cdot\nabla \u)\big\rangle+C\norm{ \nabla\b}{H^{s}}^2+C\delta^2\norm{ \u}{H^{3}}^2.
\end{align}
Plugging  \eqref{ping16+1}--\eqref{ping18} and \eqref{ping22} into \eqref{ping16},
we can arrive at \eqref{ping15}. This prove the lemma.
\end{proof}

\subsection{Complete the proof of the main theorem}
Given the initial data $(\u_0, \b_0)\in H^{N}$, the local well-posedness of the system \eqref{m} has been  proved in \cite{chae2014}
by using the energy method. Thus, we may assume that there exist $T > 0$ and a unique solution
$(\u,\b)\in C([0,T];H^{N})$ of the system \eqref{m}. Furthermore, we may assume that
\begin{align}\label{ping23}
\sup_{t\in[0,T]}(\norm{\u}{H^{N}}+\norm{\b}{H^{N}})\le \delta,
\end{align}
for some $0<\delta<1$
 to be determined later.

According to the embedding relation, we get for any $N\ge 3$ that
\begin{align}\label{gan1}
&\|\nabla\u\|_{L^{\infty}}+\|\nabla\b\|_{L^{\infty}}+\|\nabla\b\|_{L^{\infty}}^2+\|\u\|_{L^{\infty}}^2+\|\b\|_{L^{\infty}}^2\nn\\
&\quad\le C(\|\nabla\u\|_{H^{2}}+\|\nabla\b\|_{H^{2}}+\|\nabla\b\|_{H^{2}}^2+\|\u\|_{H^{2}}^2+\|\b\|_{H^{2}}^2)\nn\\
&\quad\le C(\|\u\|_{H^{N}}+\|\b\|_{H^{N}}+\|\u\|_{H^{N}}^2+\|\b\|_{H^{N}}^2)\nn\\
&\quad\le C\delta(1+\delta)
\end{align}
from which and taking $m=r+4$ in \eqref{3.16} gives
 \begin{align}\label{gan2}
\frac12\frac{d}{dt}&\big(\|\u\|_{H^{r+4}}^2+\|\b\|_{H^{r+4}}^2\big)
 +\|\nabla\b\|_{H^{r+4}}^2
 \leq C\delta(1+\delta) \big(\|\u\|_{H^{r+4}}^2+\|\b\|_{H^{r+4}}^2\big).
\end{align}
Due to
$$\int_{\T^3}\b_0\,dx=0,$$
there holds
\begin{align}\label{gan3}
\|\b\|_{H^{r+5}}^2\le C\|\nabla\b\|_{H^{r+4}}^2.
\end{align}
Hence, let
 $A\ge 1+2C$ be a constant determined later, we infer from Lemma \ref{ping13},  \eqref{gan2} and \eqref{gan3} that
\begin{align}\label{ping24}
&\frac{d}{dt}\left\{A(\norm{\u}{H^{r+4}}^2+\norm{\b}{H^{r+4}}^2)-\sum_{0\le s\le r+3}\big\langle{D^s}\b,{D^s}({\n}\cdot\nabla \u)\big\rangle\right\}\nn\\
&\quad+A\norm{\nabla\b}{H^{r+4}}^2+\norm{{\n}\cdot\nabla \u}{H^{r+3}}^2\nn\\
&\le CA\delta(1+\delta) \big(\|\u\|_{H^{r+4}}^2+\|\b\|_{H^{r+4}}^2\big)
+C\delta^2\norm{\u}{H^3}^2.
\end{align}

In view of Lemma \ref{diu}, for any ${N}\ge 2r+5$,  we have
\begin{align}\label{ping25}
\norm{ \u}{H^{3}}^2\le& C\norm{{\n}\cdot\nabla \u}{H^{r+3}}^2\quad
\norm{ \u}{H^{r+4}}^2\le\norm{ \u}{H^{3}}\norm{ \u}{H^{{N}}}\le C\delta\norm{{\n}\cdot\nabla \u}{H^{r+3}}
\end{align}
which gives
\begin{align}\label{}
&CA\delta(1+\delta) \|\b\|_{H^{r+4}}^2
  \le CA\delta^2(1+\delta)\norm{ \nabla\b}{H^{r+4}},
\end{align}
and
\begin{align}\label{}
&CA\delta(1+\delta) \|\u\|_{H^{r+4}}^2+C\delta^2\norm{\u}{H^3}^2
  \le CA\delta^2(1+\delta)\norm{{\n}\cdot\nabla \u}{H^{r+3}}.
\end{align}

As a result, we can infer from \eqref{ping24} that
\begin{align}\label{ping30}
&\frac{d}{dt}\left\{A(\norm{\u}{H^{r+4}}^2+\norm{\b}{H^{r+4}}^2)-\sum_{0\le s\le r+3}\big\langle{D^s}\b,{D^s}({\n}\cdot\nabla \u)\big\rangle\right\}\nn\\
&\qquad+A\norm{\nabla\b}{H^{r+4}}^2+\norm{{\n}\cdot\nabla \u}{H^{r+3}}^2\nn\\
&\quad\le CA\delta^2(1+\delta)\norm{{\n}\cdot\nabla \u}{H^{r+3}}+CA\delta^2(1+\delta)\norm{ \nabla\b}{H^{r+4}}.
\end{align}
Define
\begin{align*}
{\mathcal{D}(t)}=&A\norm{\nabla\b}{H^{r+4}}^2+\norm{{\n}\cdot\nabla \u}{H^{r+3}}^2,\nn\\
{\mathcal{E}(t)}=&A(\norm{\u}{H^{r+4}}^2+\norm{\b}{H^{r+4}}^2)-\sum_{0\le s\le r+3}\big\langle{D^s}\b,{D^s}({\n}\cdot\nabla \u)\big\rangle.
\end{align*}
Taking $A>1$ such that
$${\mathcal{E}(t)}\ge\norm{\u}{H^{r+4}}^2+\norm{\b}{H^{r+4}}^2.$$
Hence, by choosing $\delta>0$ small enough, we can get that
\begin{align}\label{ping33}
\frac{d}{dt}{\mathcal{E}(t)}+\frac12{\mathcal{D}(t)}\le 0.
\end{align}
For any ${N}\ge 4r+7$, by the interpolation inequality, we have
\begin{align*}
\norm{ \u}{H^{r+4}}^2\le&\norm{ \u}{H^{3}}^{\frac32}\norm{ \u}{H^{{N}}}^{\frac12}\le C\delta^{\frac12}\norm{{\n}\cdot\nabla \u}{H^{r+3}}^{\frac32}
\end{align*}
which further implies that
\begin{align*}
{\mathcal{E}(t)}\le& C(\norm{\u}{H^{r+4}}^2+\norm{\b}{H^{r+4}}^2)\nn\\
\le& C\norm{ \u}{H^{3}}^{\frac32}\norm{ \u}{H^{{N}}}^{\frac12}+C\norm{ \b}{H^{3}}^{\frac32}\norm{ \b}{H^{{N}}}^{\frac12}\nn\\
\le& C\delta^{\frac12}\norm{{\n}\cdot\nabla \u}{H^{r+3}}^{\frac32}+C\delta^{\frac12}\norm{\nabla \b}{H^{r+4}}^{\frac32}\nn\\
\le&(D(t))^{\frac34}.
\end{align*}
So, we get a  Lyapunov-type inequality
\begin{align*}
\frac{d}{dt}{\mathcal{E}(t)}+c({\mathcal{E}(t)})^{\frac43}\le 0.
\end{align*}
Solving this inequality yields
\begin{align}\label{ping37}
{\mathcal{E}(t)}\le C(1+t)^{-3}.
\end{align}
Taking $m={N}$ in \eqref{3.16} and using the embedding relation give
\begin{align}\label{ping38}
&\frac{d}{dt}(\norm{\u}{H^{N}}^2+\norm{\b}{H^{N}}^2)+\norm{\nabla\b}{H^{N}}^2\nn\\
&\quad\le C(\norm{ \u}{H^3}+\norm{ \u}{H^2}^2+\norm{\b}{H^3}+\norm{\b}{H^3}^2)(\norm{\u}{H^{N}}^2+\norm{\b}{H^{N}}^2).
\end{align}
From \eqref{ping37}, we have
\begin{align}\label{}
\int_0^t(\norm{ \u(\tau)}{H^3}+\norm{\u(\tau)}{H^2}^2+\norm{ \b(\tau)}{H^3}+\norm{\b(\tau)}{H^3}^2)\,d\tau\le C,
\end{align}
thus, exploiting
the Gronwall inequality implies
\begin{align}\label{ping39}
\norm{\u}{H^{N}}^2+\norm{\b}{H^{N}}^2
\le&C(\norm{\u_0}{H^{N}}^2+\norm{\b_0}{H^{N}}^2)
\le C\varepsilon^2.
\end{align}
Taking $\varepsilon$ small enough so that $C\varepsilon\le \delta/2$, we deduce from a continuity argument that the local solution
can be extended as a global one in time.

Moreover, from \eqref{ping37}, we also have the following decay rate
\begin{align*}
\norm{\u(t)}{H^{r+4}}+\norm{\b(t)}{H^{r+4}}\le C(1+t)^{-\frac{3}{2}}.
\end{align*}
Thus,
for any $\beta> r+4$, choosing ${N}>\beta$ and  using the following interpolation inequality
\begin{align*}
\norm{f(t)}{H^{\beta}}\le\norm{f(t)}{H^{r+4}}^{\frac{{N}-\beta}{{N}-r-4}}
\norm{f(t)}{H^{N}}^{\frac{\beta-r-4}{{N}-r-4}},
\end{align*}
we can get the decay rate for the higher order energy
\begin{align*}
\norm{\u(t)}{H^{\beta}}+\norm{\b(t)}{H^{\beta}}\le C(1+t)^{-\frac{3({N}-\beta)}{2({N}-r-4)}}.
\end{align*}
This completes the proof of Theorem \ref{dingli}.$\hspace{8.3cm}\square$

\section*{ Acknowledgments}
This work is  supported by  the National Natural Science Foundation of China under grant number 11601533,  the
National Natural Science Foundation key project of China under grant number 11831003, and the
Science and Technology Program of Shenzhen under grant number 20200806104726001.

\end{document}